\def\oo{\infty}
\def\bbP{\mathbb{P}}
\def\bbR{\mathbb{R}}
\def\boldOne{\boldsymbol{1}}
\let\IfStar\@ifstar 
\newcommand*{\eqdef}{\@ifstar{\eqdef@B}{\eqdef@A}}
\newcommand*{\eqdef@A}{\stackrel{{\scriptscriptstyle \mathrm{def}}}{\coloneqq}}
\newcommand*{\eqdef@B}{\coloneqq}
\newcommand*{\defeq}{\@ifstar{\defeq@B}{\defeq@A}}
\newcommand*{\defeq@A}{\stackrel{{\scriptscriptstyle \mathrm{def}}}{\eqqcolon}}
\newcommand*{\defeq@B}{\eqqcolon}
\def\@given@A{{\mkern2mu\mid\mkern1.5mu}}
\def\@given@B{\middle|}
\let\given\@given@A
\newcommand*{\useGiven@B}[1]{\def\given{\@given@B}#1\def\given{\@given@A}}
\mathchardef\equals=\mathcode`=
\def\equals@A{\equals}
\def\equals@B{{\mkern0.5mu\equals\mkern1.5mu}}
\def\useEquals@A#1{\begingroup\lccode`~=`=\lowercase{\endgroup\def~}{\equals@A}\mathcode`=="8000{#1}\begingroup\lccode`~=`=\lowercase{\endgroup\def~}{\equals}}
\def\useEquals@B#1{\begingroup\lccode`~=`=\lowercase{\endgroup\def~}{\equals@B}\mathcode`=="8000{#1}\begingroup\lccode`~=`=\lowercase{\endgroup\def~}{\equals}}
\newcommand*{\style@A}[1]{\useEquals@A{#1}}
\newcommand*{\style@B}[1]{\useEquals@B{\useGiven@B{#1}}}
\newcommand*{\set}{\@ifstar{\set@B}{\set@A}}
\newcommand*{\set@A}[1]{\style@A{\{#1\}}}
\newcommand*{\set@B}[1]{\style@B{\left\{#1\right\}}}
\newcommand*{\abs}{\@ifstar{\abs@B}{\abs@A}}
\newcommand*{\abs@A}[1]{\style@A{|#1|}}
\newcommand*{\abs@B}[1]{\style@B{\left|#1\right|}}
\newcommand*{\norm}{\@ifstar{\norm@B}{\norm@A}}
\newcommand*{\norm@A}[1]{\style@A{\Vert#1\Vert}}
\newcommand*{\norm@B}[1]{\style@B{\left\lVert\{#1\right\rVert}}
\newcommand*{\lr}{\@ifstar{\lr@B}{\lr@A}}
\newcommand*{\lr@A}[1]{\style@A{(#1)}}
\newcommand*{\lr@B}[1]{\style@B{\left(#1\right)}}
\newcommand*{\LR}{\@ifstar{\LR@B}{\LR@A}}
\newcommand*{\LR@A}[1]{\style@A{[#1]}}
\newcommand*{\LR@B}[1]{\style@B{\left[#1\right]}}
\newcommand*{\vecList}{\@ifstar{\vecList@B}{\vecList@A}}
\newcommand*{\vecList@A}[1]{{\LR{#1}}^\top}
\newcommand*{\vecList@B}[1]{{\LR*{#1}}^\top}
\newcommand*{\one}[1]{\boldOne\set{#1}\,}
\newcommand*{\prob}[1]{\IfStrEq{#1}{_}{\ProbWithSub}{\bbP\lr{#1}}}
\newcommand*{\ProbWithSub}[2]{{\operatorname*{\bbP}_{#1}(#2)}}
\definecolor{customBlue}{HTML}{0064bd}
\definecolor{customRed}{HTML}{9F393D}
\definecolor{codebg}{RGB}{248,248,248}
\definecolor{keyword}{RGB}{0,0,180}
\definecolor{comment}{RGB}{0,150,0}
\definecolor{string}{RGB}{180,0,0}
\definecolor{linegray}{RGB}{200,200,200}
\lstdefinestyle{mycode}{
    language=Python,
    backgroundcolor=\color{codebg},   
    basicstyle=\ttfamily\footnotesize,       
    keywordstyle=\color{keyword}\bfseries,
    commentstyle=\color{comment}\itshape,
    stringstyle=\color{string},
    numbers=left,                     
    numberstyle=\tiny\color{linegray},
    stepnumber=1,                      
    numbersep=8pt,
    showstringspaces=false,            
    tabsize=4,
    breaklines=true,                   
    breakatwhitespace=true,
    frame=single,                      
    rulecolor=\color{linegray},
    captionpos=b,                      
    linewidth=\linewidth,              
    xleftmargin=0pt,
    xrightmargin=0pt,
    aboveskip=1em,
    belowskip=1em,
    columns=fullflexible               
}
\newtheorem{theorem}{Theorem}[section]
\theoremstyle{definition}
\theoremstyle{remark}
\newtheorem{proposition}[theorem]{Proposition}
\def\thm@space@setup{\thm@preskip=\parskip \thm@postskip=0pt}\makeatother
\let\oldproof\proof\def\proof{\oldproof\unskip}
\newcounter{equationNumber}\setcounter{equationNumber}{0}
\def\tagEquation{\tag{\arabic{equationNumber}}\stepcounter{equationNumber}}
\begin{document}

\title{Uniform Distributions on p-Balls and the Singular Role of p=1,2,\texorpdfstring{$\infty$}{infinity} in p-Norm Geometry}



\author{Carlos Pinzón}


\maketitle

\begin{abstract}
  This paper studies the relationship between volume and surface uniform measures on n-dimensional $p$-balls under the $p$-norm.
  It is proved that for $p=1$, $p=2$ and $p=\infty$, and only for these values of $p$, radial projection maps a volumetrically uniform distribution to a surface-uniform distribution.
  Algorithms for exact and approximate uniform sampling on $p$-balls and $p$-spheres are provided, together with empirical illustrations.
\end{abstract}

\def\Sph{\textsf{Sph}\xspace}
\def\Simplex{\Delta\xspace}
\def\Ball{\textsf{Ball}\xspace}
\def\Leb{{\mu_\text{Leb}}\xspace}
\def\eps{\epsilon}

\def\IndIso{\textnormal{\textsf{IndIso}}\xspace}
\def\NTail{\text{NTail}\xspace}
\def\HTail{\text{HTail}\xspace}

\def\bfx{\mathbf{x}}
\def\bfy{\mathbf{y}}
\def\bf#1{\textbf{#1}}

\def\UniDir{\textnormal{\textsf{UniDir}}\xspace}
\def\Dist{\textnormal{\textsf{Dist}}\xspace}
\def\Rad{\textnormal{\textsf{Rad}}\xspace}
\def\Exp{\textnormal{\textsf{Exp}}\xspace}
\def\Chi{\textnormal{\textsf{Chi}}\xspace}
\let\G\Gamma
\def\Gamma{\textnormal{\textsf{Gamma}}\xspace}
\def\Beta{\textnormal{\textsf{Beta}}\xspace}
\def\Uni{\textnormal{\textsf{Uni}}\xspace}
\def\Lomax{\textnormal{\textsf{Lomax}}\xspace}
\def\Gauss{\textnormal{\textsf{Gauss}}\xspace}
\def\Student{\textnormal{\textsf{Student}}\xspace}
\def\Cauchy{\textnormal{\textsf{Cauchy}}\xspace}
\def\GenGamma{\textnormal{\textsf{GenGamma}}\xspace}
\def\Erlang{\textnormal{\textsf{Erlang}}\xspace}
\def\Weibull{\textnormal{\textsf{Weibull}}\xspace}
\def\InvGamma{\textnormal{\textsf{InvGamma}}\xspace}
\def\Levy{\textnormal{\textsf{Levy}}\xspace}
\def\UniParetoMix{\textnormal{\textsf{UniParetoMix}}\xspace}
\NewDocumentCommand{\nD}{O{n}}{^{(#1)}}
\NewDocumentCommand{\n}{O{n}}{_{1..#1}}
\NewDocumentCommand{\DKL}{s O{} O{}}{
    \IfBooleanTF{#1}
        {{D_{\text{KL}}}}
        {{\DKL*(#2\,||\,#3)}}
}
\NewDocumentCommand{\Lpq}{O{q}}{{\ell_{p,#1}}}

\section{Introduction}

The \emph{$p$-norms} are a family of functions that measure the magnitude of $n$-dimensional vectors in $\bbR^n$ as\[
  \norm{\bfx}_p \eqdef \begin{cases}
    \lr*{\sum_{i=1}^n |x_i|^p}^{1/p} & \text{if } p\in(0,\oo) \\
    \lim_{p\to\oo} \norm{\bfx}_p = \max_{i=1}^n |x_i| & \text{if } p=\oo.
  \end{cases}
\]

These norms are ubiquitous across mathematics, statistics, and machine learning.
For $p\geq 1$, the induced \emph{$p$-distance} function $\norm{x-y}_p$ defines a metric for $\bbR^n$, known as the Minkowski distance, which forms the basis for several important functional spaces, including $\ell^p$ and $L^p$ spaces.


The norms $p=1$, $p=2$, and $p=\oo$ are fundamentally special.
Geometrically, their unit $p$-balls yield simple shapes: diamonds, circles, and squares in two dimensions.
Their induced distances\em known as the Manhattan, Euclidean, and Chebyshev metrics\em are canonical examples of metrics in the theory of metric spaces.

Furthermore, in statistics, the $p$-norms can be used to define the most fundamental central tendency statistics as $m_p(X) \eqdef \inf_m E(\norm{X-m}_p)$.
Notably, the mean occurs with $p=2$, the median with $p=1$, the midrange with $p=\oo$.
Three important limits also occur as $p\to 0^+$, namely, the geometric mean via $\norm{x}_p (1/n)^{1/p} \to (\prod_{i=1}^n x_i)^{1/n}$, the counting measure via $\norm{x}_p^p \to \sum_{i=1}^n \one{x_i \neq 0}$ (this is the one related to the mode), and the plain limit $\norm{x}_p \to \oo$ for all $x\neq 0$.

Lastly, in machine learning, the $p$-norms are used as regularization penalties to reduce overfitting ---mainly L1 and L2 penalties, and as loss functions in regression problems.


This paper identifies a deeper structural property: for these three norms, and only these, radial projection from a volumetrically uniform distribution on the $n$-dimensional $p$-ball yields a superficially uniform distribution on the $p$-sphere.
This equivalence fails for all other $p$, highlighting a structural discontinuity in the geometry of $p$-norms.

\section{Squigonometric functions}
\def\arc{\mathrm{arc}}
\def\sinarc{\sin^\arc}
\def\cosarc{\cos^\arc}
\def\piarc{\pi^\arc}

\begin{figure}[ht]
  \hspace{-0.1em}
  \begin{minipage}{\textwidth}
    \includegraphics[width=0.50\textwidth]{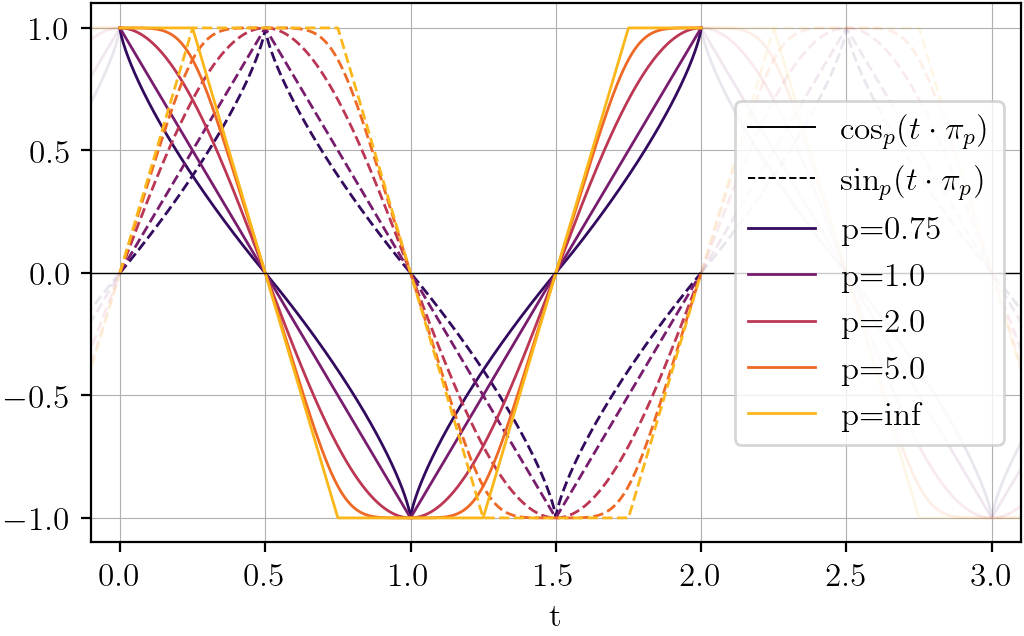}
    \includegraphics[width=0.475\textwidth]{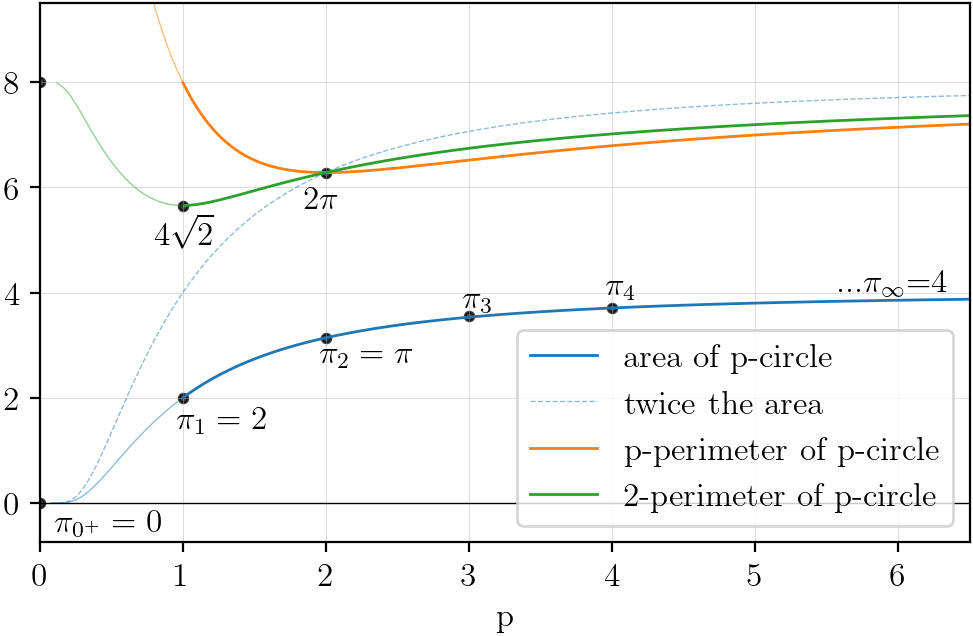}
  \end{minipage}
  \caption{Squigonometric functions (left) and areas and perimeters of the unit $p$-circle in different $p$-norms (right).}
  \label{fig:p-perimeters}
\end{figure}

To parametrize the two dimensional unit $p$-sphere and $p$-ball, also known as the $p$-circle, we employ the \emph{squigonometric functions} introduced in~\cite{wood2020squigonometry-chaps3-7}.
These term squigonometric reflects the shape of the $p$-circle, which lies between a square ($p=\oo$) and a circle ($p=2$)—a form often called a ``squircle`' in~\cite{wood2020squigonometry-chaps3-7}.

Let $\sin_p(t)$ and $\cos_p(t)$ denote the generalized sine and cosine functions.
For $p<\oo$, there are defined (Section 3.1 of \cite{wood2020squigonometry-chaps3-7}) as the unique solution to the coupled initial value problem\[
  C'(t) = -S(t)^{p-1} ;\quad
  S'(t) = C(t)^{p-1} ;\quad
  C(0) = 1 ;\quad
  S(0) = 0.
  \tagEquation\label{eq:squigonometric-ode}
  \]
In the limiting case $p\to\oo$, the functions converge to piecewise linear analogues $\sin_\oo(t) \eqdef \cos_\oo(t-2)$ and \[
\cos_\oo(t) \eqdef \begin{cases}
\min(1, \max(-1, |t-4|-2)) & \text{if } t\in[0, 8] \\
\cos_\oo(t \mod 8) & \text{otherwise}. \\
\end{cases}
\]

The curve $(\cos_p(t),\sin_p(t))$ as $t$ varies from $0$ to $2\pi_p$, where $\pi_p$ is the area of the $p$-circle, traces the boundary of the unit $p$-circle, as shown in Figure~\ref{fig:2D} for $t\in[0,\pi_p/2]$.
Furthermore, $\cos_p(t)$ and $\sin_p(t)$ satisfy the following properties:
\begin{enumerate}
  \item $\pi_p$ is the area of the $p$-circle.
  \item $\norm{(\cos_p(t),\sin_p(t))}_p = 1$ for all $t\in\bbR$.
  \item $\cos_p(t) = \cos_p(-t) = \sin_p(t-\pi_p/2)$ for all $t\in\bbR$.
\end{enumerate}

These generalized trigonometric functions, depicted in Figure~\ref{fig:p-perimeters}, provide an area-based parametrization of the $p$-circle that plays a central role in the geometric analysis that follows. 



\section{Area vs. perimeter in 2D}

We now study the relationship between area and arc length on the $p$-circle.
Consider the first quadrant, parametrized by $\set{(\cos_p(t), \sin_p(t)): t\in[0,\pi_p/2]}$.
In reference to Figure~\ref{fig:2D}, define $A(t)$ as the area of the orange shade, which is enclosed by the circumference of the $p$-circle to the right, by the x-axis to the bottom, and by the line that crosses $(0,0)$ and $(\cos_p(t), \sin_p(t))$ to the top, and let $\Lpq(t)$ be the \emph{$q$-length} of the black curve $(1,0)$ to $(\cos_p(t), \sin_p(t))$.
The parameter $q$ defines how the length is measured, and the only relevant values are $q=2$ for euclidean length and $q=p$ for the metric induced by $p$ itself.

\begin{figure}[ht]
  \centering
  \raisebox{0em}{
    \includegraphics[width=0.5\textwidth]{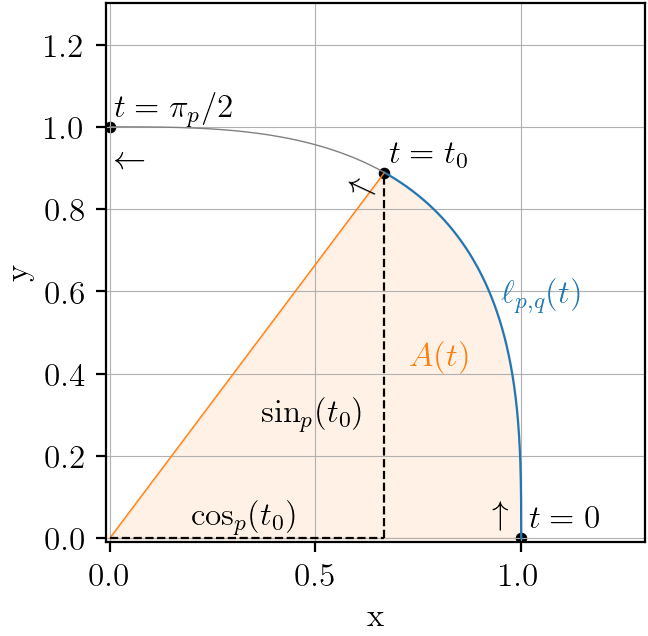}
  }
  \caption{Parametrization of a quarter of the $(n{=}2)$-dimensional $(p{=}4)$-ball showing cumulative area (orange) and $q$-length (blue).}
  \label{fig:2D}
\end{figure}

The $q$-length, can be computed as $\Lpq(t_0) = \int_0^{t_0} d\,\Lpq(t)$, where the infinitesimal $d\,\Lpq(t)$ is the $q$-norm of a triangle with sides $d\,\cos_p(t)$ and $d\,\sin_p(t)$.
Following the ODE in Equation~\eqref{eq:squigonometric-ode} for $p<\oo$, the two sides become $-\sin_p(t)^{p-1}\,dt$ and $\cos_p(t)^{p-1}\,dt$ respectively, and for $p=\oo$, the triangle degenerates into a single line of length $dt$ (regardless of $q$).
Hence, \[
  \frac{d\,\Lpq(t)}{dt} = \begin{cases}
    \norm{(\cos_p(t)^{p-1}, \sin_p(t)^{p-1})}_q &\text{ if } p<\oo\\
    1 &\text{ if }  p=\oo. \\
  \end{cases}
  \tagEquation\label{eq:dLqdt}
\]

It is known that $A(t)$ is proportional to $t$ (more precisely $A(t) = \frac{t}{2}$), which is why this parametrization is called area-based~\cite{wood2020squigonometry-chaps3-7}, pages 59-60. Indeed, since
\begin{align*}
  A(t_0) =& \int_{\pi_p/2}^{t_0} \cos_p(t) \frac{\sin_p(t_0)}{\cos_p(t_0)}\;d\,\cos_p(t) + \int_{t_0}^0 \sin_p(t)\;d\,\cos_p(t)
  \\ =& \frac{\sin_p(t_0)}{\cos_p(t_0)} \left.\frac{\cos_p(t)^2}{2}\right|_{\pi_p/2}^{t_0} + \int_{t_0}^0 \sin_p(t)\;d\,\cos_p(t)
  \\ =& \frac{1}{2} \sin_p(t_0) \cos_p(t_0) + \int_{t_0}^0 \sin_p(t)\;d\,\cos_p(t),
  \tagEquation\label{eq:area-integral}
\end{align*}
then, \[
  \frac{d\,A(t)}{dt} = \frac{1}{2} \cos_p(t)^p - \frac{1}{2} \sin_p(t)^p - \sin_p(t)^p = \frac{1}{2} (\cos_p(t)^p + \sin_p(t)^p) = \frac{1}{2}.
  \tagEquation\label{eq:dAdt}
\]

With equations \eqref{eq:dLqdt} and \eqref{eq:dAdt}, we can prove the following proportionality relationship between $A(t)$ and $\Lpq(t)$.

\begin{proposition}\label{prop:2D}
  The functions $A(t)/A(\pi_p/2)$ and $\Lpq[p](t)/\Lpq[p](\pi_p/2)$ of relative area and relative $p$-length for $t\in[0,\pi_p/2]$ are equal if and only if $p\in\set{1,2,\oo}$.
  Moreover, these functions are also equal to $\Lpq[2](t)/\Lpq[2](\pi/2)$ if and only if $p\in\set{1,2,\oo}$.
\end{proposition}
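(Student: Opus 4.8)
The plan is to reduce both equalities to a single one-variable constancy question. By Equation~\eqref{eq:dAdt} we have $A(t)=t/2$, so the relative area $A(t)/A(\pi_p/2)=2t/\pi_p$ is linear and its derivative is the constant $2/\pi_p$. Each relative length $\Lpq(t)/\Lpq(\pi_p/2)$ vanishes at $t=0$ and equals $1$ at $t=\pi_p/2$, exactly like the relative area; two such differentiable functions agree on all of $[0,\pi_p/2]$ if and only if their derivatives agree, and after dividing by the positive constant $\Lpq(\pi_p/2)$ this is equivalent to demanding that $d\,\Lpq(t)/dt$ be constant in $t$ (the value of that constant then matches automatically, since $\Lpq(0)=0$). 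Thus the whole proposition becomes: \emph{for which $p$ is the right-hand side of Equation~\eqref{eq:dLqdt} independent of $t$}, once with $q=p$ and once with $q=2$.

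First I would make the substitution $u\eqdef\cos_p(t)^p$; property~2 of the squigonometric functions gives $\cos_p(t)^p+\sin_p(t)^p=1$, so $\sin_p(t)^p=1-u$, and since $\cos_p$ decreases monotonically from $1$ to $0$ on $[0,\pi_p/2]$, the variable $u$ sweeps all of $[0,1]$. Hence ``constant in $t$'' means ``constant in $u\in[0,1]$''. For finite $p$, Equation~\eqref{eq:dLqdt} becomes
\[
  \frac{d\,\Lpq(t)}{dt}=\lr*{u^{q(p-1)/p}+(1-u)^{q(p-1)/p}}^{1/q},
\]
so, writing $\alpha\eqdef q(p-1)/p$, everything hinges on whether $\phi_\alpha(u)\eqdef u^\alpha+(1-u)^\alpha$ is constant on $[0,1]$. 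Its derivative is $\phi_\alpha'(u)=\alpha\lr*{u^{\alpha-1}-(1-u)^{\alpha-1}}$, which vanishes identically precisely when $\alpha=0$ or $\alpha=1$ (for any other $\alpha$ the map $u\mapsto u^{\alpha-1}$ is strictly monotone, so $\phi_\alpha'$ vanishes only at $u=\half$). For $q=p$ this gives $\alpha=p-1\in\set{0,1}$, i.e. $p\in\set{1,2}$; for $q=2$ it gives $\alpha=2(p-1)/p\in\set{0,1}$, i.e. again $p\in\set{1,2}$. The case $p=\oo$ is settled directly from the second branch of Equation~\eqref{eq:dLqdt}, where $d\,\Lpq(t)/dt=1$ for every $q$, so both equalities hold. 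Collecting these yields $p\in\set{1,2,\oo}$ in both claims.

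The main obstacle I anticipate is not the algebra but making the reduction of the first paragraph airtight at the boundary. I would verify that $\Lpq$ is differentiable and strictly increasing on $[0,\pi_p/2]$ (so that ``equal normalized functions $\iff$ equal derivatives'' is legitimate), and that the degenerate exponents at the endpoints---for instance $\alpha=0$ when $p=1$, where terms like $0^{\alpha}$ or $0^{\alpha-1}$ formally appear---do not affect the conclusion; since $u$ is swept over the open interval and $\phi_\alpha$ is continuous and constant there, constancy on $(0,1)$ is all that is needed and the endpoint singularities are harmless. A secondary check is that the collapsing-triangle description at $p=\oo$ (a segment of length $dt$) is consistent with the $p\to\oo$ limit of the finite-$p$ formula, which Equation~\eqref{eq:dLqdt} already encodes.
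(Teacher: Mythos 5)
Your proposal is correct, and its skeleton is the same as the paper's: both proofs observe that the normalized functions already agree at $t=0$ and $t=\pi_p/2$, reduce the claimed equalities to the constancy of $\Lpq'(t)$ via $A'(t)=\half$ from Equation~\eqref{eq:dAdt}, and then decide for which $(p,q)$ with $q\in\set{p,2}$ the right-hand side of Equation~\eqref{eq:dLqdt} is constant, treating $p=\oo$ separately. The one genuine difference is how the ``only'' direction is settled: the paper simply asserts that it is clear there are only three constancy cases ($p=\oo$; $p=1$; and $(p-1)q=p$), whereas your substitution $u=\cos_p(t)^p$ converts the question into whether $\phi_\alpha(u)=u^\alpha+(1-u)^\alpha$ is constant on $(0,1)$, and your computation $\phi_\alpha'(u)=\alpha\lr{u^{\alpha-1}-(1-u)^{\alpha-1}}$ proves constancy happens exactly for $\alpha=q(p-1)/p\in\set{0,1}$, which recovers the paper's cases $p=1$ and $(p-1)q=p$ and rigorously excludes every other value of $p$. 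So your write-up is actually tighter on exhaustiveness, which is the weakest point of the paper's argument; the only debts you incur (and correctly flag) are the routine facts that $\cos_p(t)^p$ sweeps all of $[0,1]$ continuously on $[0,\pi_p/2]$ and that $\Lpq$ is differentiable with derivative given by Equation~\eqref{eq:dLqdt}, both of which follow from the defining ODE in Equation~\eqref{eq:squigonometric-ode}.
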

\begin{proof}
  Let $q>0$ be the norm used for measuring the relative length.
  Since the relative functions are already equal at 0 and 1, taking values 0 and 1 respectively, it suffices to show that $A'(t)\propto \Lpq'(t)$.
  From Equation~\eqref{eq:dAdt}, we have $A'(t) = 1/2$, so the problem reduces to show that $\Lpq'(t)$ is a positive constant.
  From Equation~\eqref{eq:dLqdt}, it's clear that there are only three cases in which $\Lpq'(t)$ is constant:
  \begin{enumerate}
    \item $p=\oo$ forces $\Lpq'(t)=1$ explicitly for all $q$;
    \item $p=1$ forces $\Lpq'(t)=\norm{(1,1)}_q$ which is constant for all $q$;
    \item if $(p-1)q=p$, then the property $\cos_p(t)^p + \sin_p(t)^p=1$ can be used in Equation~\eqref{eq:dLqdt} to obtain $\Lpq'(t) = (\cos_p(t)^{(p-1)q} + \sin_p(t)^{(p-1)q})^{1/q} = 1$, and this only happens when $p=\frac{q}{q-1}$.
  \end{enumerate}
  With the additional constraint that $q\in\set{2, p}$, there are only five solutions, which are $(p,q)\in\set{(1,1),(1,2), (2,2),(\oo,2), (\oo,\oo)}$.
\end{proof}


\begin{figure}[ht]
  \centering
  \includegraphics[width=0.94\textwidth]{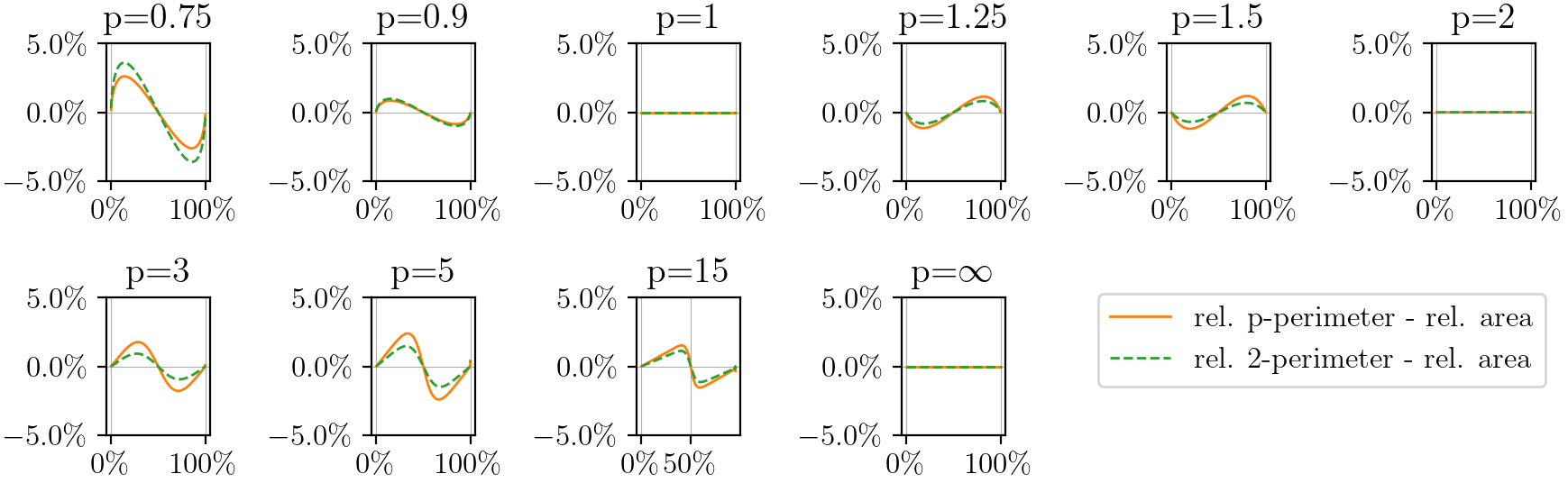}
  \caption{
    Difference between relative perimeters (cumulative divided by total) and relative area varying $t\in[0,\pi_p/2]$.
    The x-axis is the relative area. The two curves are zero only for $p\in\{1,2,\oo\}$.
    }
  \label{fig:relative-diff}
\end{figure}

As a consequence of Proposition~\ref{prop:2D}, it is not the same to sample uniformly from the area of the $p$-circle and project radially to its circumference, as it is to sample uniformly from its circumference directly, unless and only unless $p\in\set{1,2,\oo}$. This is shown in Figure~\ref{fig:relative-diff}.

\subsection{If there is a p-length, why not a p-area?}

While the notion of arc length on a curve depends on the ambient nor, hence allowing for a definition of $q$-length, the same does not hold for area.
The reason lies in how these measures are approximated.

Consider a rectangle with vertices $(0,0)$, $(a,0)$, $(0,b)$, $(a,b)$.
Regardless of the $q$-norm being used to measure the sides, the lengths remain $a$ and $b$, and so the area is independent of $q$, at least for straight (non-tilted) rectangles.
If we assume the following minimal assumptions for a notion of area,
\begin{enumerate}
  \item the area of any figure is non-negative,
  \item the area of the union of disjoint pieces gives the sum of their areas, and
  \item translation invariance,
\end{enumerate}
then, it follows that the area of any (non-fractal or pathologic) 2D figure can be approximated with the sum of areas of many tiny disjoint rectangles that cover it.
For non-pathologic figures, the total area of the rectangles that touch the figure border (overestimating the total area) converges to zero as the resolution increases, hence also the error of the approximation converges to zero.
Since this limiting procedure yields the same result independently of the norm $q$ used to measure the sides of the rectangles, the notion of area for figures in $\bbR^2$ is independent of $q$ and coincides with the classic euclidean measure of area.

For length, however, when segmenting a curve into infinitesimal diagonal segments, the length of each diagonal depends on the spatial norm.
If the curve was segmented into horizontal and vertical segments, the length would be the same for all $q$-norms, but this is known to be an invalid approximation for the length of a curve.

For surfaces in 3D, the approximation consists of infinitesimal inclined planes, and the area of each plane depends on the $q$-norm used to measure its diagonal sides.
The same dependency holds for lengths in 3D, while the volume in 3D is the same for all $q$-norms as it is approximated with straight cubes.
More generally, for $\bbR^n$ endowed with the $q$-norm, the $n$-dimensional measure does not depend on $q$ but all $k$-dimensional measures with $1\leq k<n$ do.

It is important to emphasize that the lengths of the diagonals and the area of inclined planes must be computed without rotating them, because all $p$-norms are sensitive to rotations, except only for the euclidean norm.
In fact, as shown in Theorem 5.5 of~\cite{macarthur2023classifying}, when $p\in[1,\oo)$ but $p\neq 2$, there are only 8 non-translating isometries (transformations that preserve size), namely, the identity, the reflections across the $y$-axis, $x$-axis, the line $y=x$, the line $y=-x$, and the rotations by $\pi/2$ , $\pi$, and $-\pi/2$.
So, defining a rotation in 2D as an isometry that fixes the origin and only the origin, there are only 4 rotations for $p\neq 2$, while for $p=2$ there are infinitely many.
For this reason, at least from an abstract point of view, $p=2$ provides the richest structure and is the most special of all $p$-norms.

\section{Volume vs. surface in nD}

We now extend the 2D area–length analysis to the $n$-dimensional setting.
Let $\Sph^n_p$ denote the $n$-dimensional $p$-ball and let the \emph{unsigned $n$-dimensional $p$-sphere}, denoted $|\Sph^n_p|$, be the subset of $\Sph^n_p$ in which all coordinates are non-negative.
Figure~\ref{fig:3D} shows an instance of $|\Sph^n_p|$.

To parametrize the unsigned $p$-sphere, let us use the following recurrence formula that simplifies the generalization to $n$ dimensions.
Let the last coordinate be given by $z \eqdef \cos_p(t)$ and the remaining by $\sin_p(t) x_i$ for $t\in[0,\pi_p/2]$ and $(x_1, ..., x_{n-1}) \in |\Ball^{n-1}_p|$.

\begin{figure}[ht]
  \hspace{-0.1em}
  \begin{minipage}{\textwidth}
    \centering
    \includegraphics[width=0.6\textwidth]{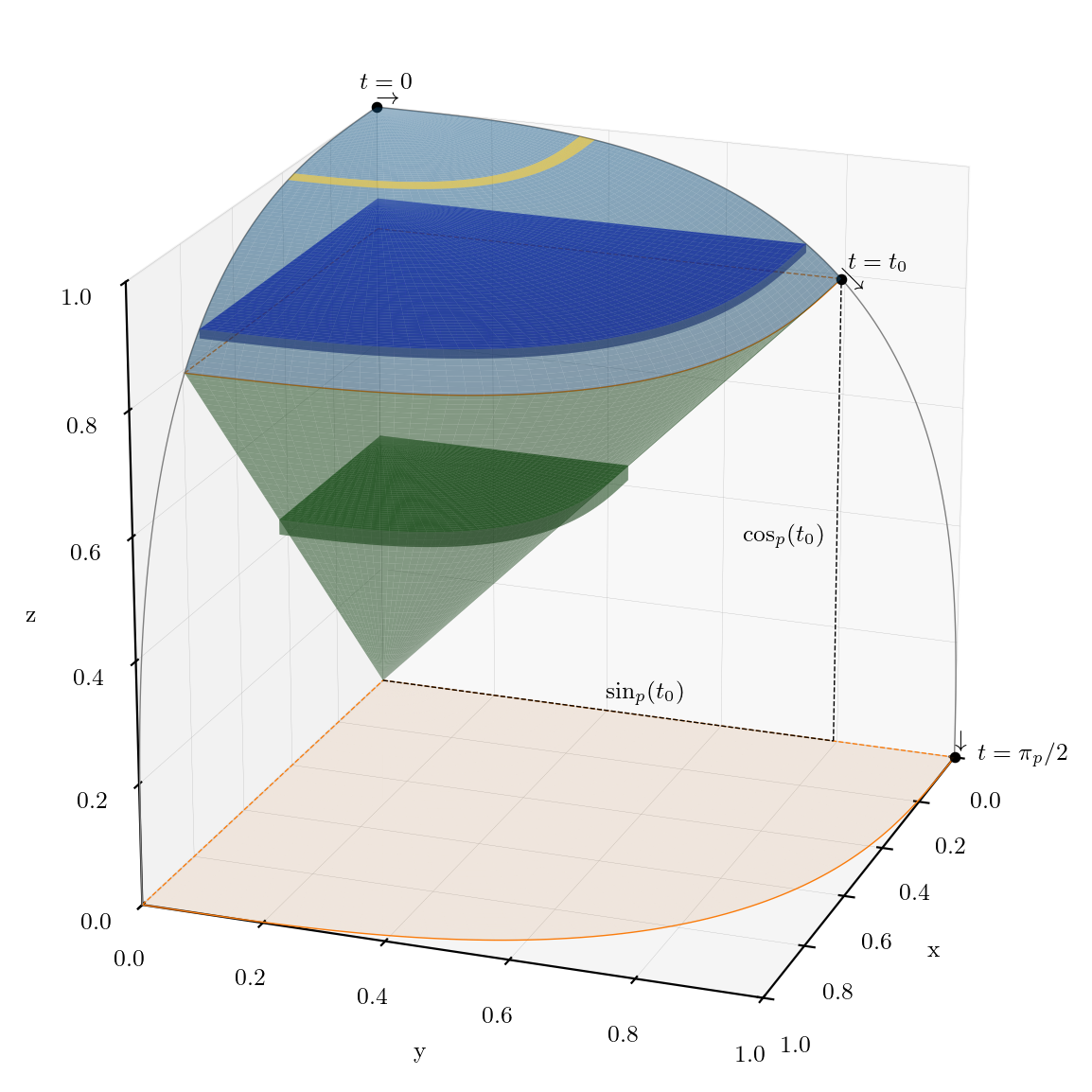}
  \end{minipage}
  \caption{Parametrization of an unsigned $(n{=}3)$-dimensional $(p{=}4)$-ball showing differential volume disks (dark blue, dark green) and surface slices (yellow).}
  \label{fig:3D}
\end{figure}

The objective is to compare the volume $V(t)$ enclosed by the light blue and green shades, with the area $S_q(t)$ of the light blue shade.
The terms \emph{volume}, \emph{area} and \emph{length} denote the $n$, $(n-1)$ and $(n-2)$-dimensional measures.
For this comparison, we shall introduce two constants.
Let $R$ be the area of the orange shade, which is contained in the hyper-plane $z=0$, and let $P_q$ be the length of its solid orange border.

The area $R$ does not depend on $Q$, because it is the mass of an $n-1$-dimensional object that is embedded in $n-1$ dimensions without rotation (by discarding the last dimension).
However, the hyper-length $P_q$ does depend on $q$, analogously to the discussion in the previous section.
In particular, for $n=3$, $P_q$ coincides with $\Lpq(\pi_p/2)$ and $R$ with $A(\pi_p/2)$.

To integrate the volume, we shall consider the dark blue and dark green hyper-disks of differential heights.
The top and bottom hyper-surfaces of these hyper-disks, are scaled versions of the same orange shape.
Notice that when this shape is scaled on every dimension by a factor $c$, the area changes by a factor $c^{n-1}$, because $p$-norms are linear to scalar multiplication and the shape is $(n-1)$-dimensional.
Hence, the area of the blue hyper-disk contained in the plane $z=\cos_p(t)$ for some $t$ is $\sin_p(t)^{n-1} R$, and the area of the dark green hyper-disk at height $z=\cos_p(t)$, is $R \cos_p(t) \lr{\frac{\sin_p(t_0)}{\cos_p(t_0)}}^{n-1}$.
This yields
\begin{align*}
  V(t_0)
  &= \int_0^{t_0} R \sin_p(t)^{n-1}\, d\cos_p(t) + \int_{t_0}^{\pi_p/2} R \left(\frac{\sin_p(t_0) \cos_p(t)}{z(t_0)}\right)^{n-1}\, d\cos_p(t) \\
  &= R \int_0^{t_0} \sin_p(t)^{n-1}\, d\cos_p(t) + R \frac{\sin_p(t_0)^{n-1}}{z(t_0)^{n-1}} \left.\frac{\cos_p(t)^{n}}{n}\right|_{t_0}^{\pi_p/2} \\
  &= R \int_0^{t_0} \sin_p(t)^{n-1}\, d\cos_p(t) + \frac{R}{n} \sin_p(t_0)^{n-1} z(t_0).
\end{align*}
It follows that $V(t)$ is differentiable, with derivative:
\begin{align*}
  \frac{d\,V(t)}{dt}
  &= R \sin_p(t)^{n-1} \frac{d\,\cos_p(t)}{dt} -  \frac{R}{n} \;\frac{d}{dt}\hspace{-0.2em}\left( \sin_p(t)^{n-1} \cos_p(t) \right)\\
  &= R \sin_p(t)^{n+p-2} +  \frac{R (n-1)}{n} \sin_p(t)^{n-2} \cos_p(t)^p  -  \frac{R}{n} \sin_p(t)^{n+p-2}  \\
  &=  \frac{R}{n} \sin_p(t)^{n-2} (n \sin_p(t)^{p} + (n-1) \cos_p(t)^p  - \sin_p(t)^{p})  \\
  &=  \frac{R}{n} \sin_p(t)^{n-2} (n-1).
\end{align*}

Therefore, the hyper-volume can be expressed more succinctly as
\begin{align*}
  V(t_0)
  &= \frac{R\cdot(n-1)}{n} \int_0^{t_0} \sin_p(t)^{n-2} \,dt
  \tagEquation\label{eq:integral V}
\end{align*}

To integrate the hyper-surface, consider the differential surface slice in yellow, which forms an annulus, i.e. the shape of a flat washer.
The width of the annulus is $d\,\Lpq(t)$ (a differential length along the gray curve), while the hyper-length can be computed by scaling $P_q$ by a factor of $\sin_p(t)$, i.e. $\sin_p(t)^{n-2}$ to account for all dimensions.
The area of the annulus is therefore $P_q \sin_p(t)^{n-2} d\,\Lpq(t)$, and that of the hyper-surface is
\[
  S_q(t_0) = P_q \int_0^{t_0} \sin_p(t)^{n-2}\, d\,\Lpq(t)
  \tagEquation\label{eq:integral Sq}
\]

Putting together Equations~\eqref{eq:integral V} and~\eqref{eq:integral Sq}, we can prove the following proposition that generalizes Proposition~\ref{prop:2D}.

\begin{proposition}\label{prop:nD}
  Let $n\geq 2$. The relative hyper-volume $V(t)/V(\pi_p/2)$ and hyper-$p$-area $S_p(t)/S_p(\pi_p/2)$ coincide for all $t\in[0,\pi_p/2]$ if and only if $p\in\set{1,2,\oo}$.
  Moreover, they also coincide with $S_2(t)/S_2(\pi/2)$ for every such $t$ if and only if $p\in\set{1,2,\oo}$.
\end{proposition}
\begin{proof}
  As in the 2-dimensional case (Proposition~\ref{prop:2D}), the relative functions are already equal at 0 and 1, so the problem is equivalent to showing that $V'(t)\propto S_p'(t)$, which according to formulas~\eqref{eq:integral V} and~\eqref{eq:integral Sq} is equivalent to showing that $\Lpq'(t)$ is constant.
  From the proof of Proposition~\ref{prop:2D}, it follows that this occurs with $q\in\set{p,2}$ if and only if $p\in\set{1,2,\oo}$.
\end{proof}

As a consequence of Proposition~\ref{prop:nD}, it can be concluded for all $n$ that it is not the same to sample uniformly from the hyper-volume of a $p$-ball and project radially to its hyper-surface, as it is to sample uniformly from its hyper-surface directly, unless, and only unless, $p\in\set{1,2,\oo}$.

The proof of Proposition~\ref{prop:nD} does not require to determine the constants $R$ and $P_q$ in Equations~\eqref{eq:integral V} and~\eqref{eq:integral Sq}.
For completeness, we show how to compute these next.
Let $V_{n} \eqdef 2^n V(\pi_p/2)$ denote the total volume of the $n$-dimensional $p$-ball, and $S_{n,q} \eqdef 2^n S_q(\pi_p/2)$ its total area.
Under this notation, we have $R = V_{n-1} / 2^{n-1}$ and $P_q = S_{n-1,q} / 2^{n-1}$.
Using Equations~\eqref{eq:integral V} and~\eqref{eq:integral Sq}, we obtain recursive formulas for $V_{n}$ and $S_{n,q}$, whose simplification leads to
\begin{align*}
  V_{n} &= \frac{2^n}{n} \prod_{k=0}^{n-2} \int_0^{\pi_p/2} \sin_p(t)^{k} \,dt, \\
  S_{n,q} &= 2^n \prod_{k=0}^{n-2} \int_0^{\pi_p/2} \sin_p(t)^{k} \,d\Lpq(t).\\ 
\end{align*}

\section{Sampling algorithms}

In this section we develop and discuss computational methods to sample uniformly from the unitary $p$-ball in $n$ dimensions, both volumetrically inside the ball and superficially on the frontier.
An implementation of these algorithms in Python can be found in the appendix.

\subsection{Algorithms based on squigonometry}

Based on the results of the previous section, particularly on the recursive construction of the unsigned $p$-sphere as well as Equations~\eqref{eq:integral V} and \eqref{eq:integral Sq}, we can derive the following algorithms by sampling coordinate by coordinate recursively.

\makeatletter
\newcommand{\staticlabel}[1]{#1\protected@edef\@currentlabel{#1}}
\makeatother

\textbf{Algorithms \staticlabel{V1}\label{alg:volume-coord} and \staticlabel{S1}\label{alg:surface-coord}}. Generators of random samples from the $n$-dimensional unitary $p$-ball.
Samples from Algorithm~\ref{alg:volume-coord} are uniformly distributed in the hyper-volume (in any $q$-norm).
Samples from Algorithm~\ref{alg:surface-coord} are uniformly distributed on the hyper-surface in the specified $q$-norm.
\begin{enumerate}
  \item Let $X_1 = 1$.
  \item For $k=2..n$:
  \begin{enumerate}
    \item Sample $T_k \in [0, \pi_p/2]$ with law $f_{T_k}(t)$, defined as:\\*$
      \begin{cases}
        \text{For Algorithm~\ref{alg:volume-coord} (volume), }&f_{T_k}(t)\propto \sin_p(t)^{k-2}.\\
        \text{For Algorithm~\ref{alg:surface-coord} (surface), }&f_{T_k}(t)\propto \sin_p(t)^{k-2} \frac{d\,\Lpq(t)}{dt}.
      \end{cases}
      $
    \item Let $X_k = \cos_p(T_k)$.
    \item Update $X_i \leftarrow \sin_p(T_k) X_i$ for all $i=1,2,...,k-1$.
  \end{enumerate}
  \item Sample signs $S_1, ..., S_n\sim  \Uni\set{-1, 1}$.
  \item Return $(S_i\, X_i)_{i=1}^n$.
\end{enumerate}

The implementation of these algorithms has the challenges of computing $\cos_p(t)$ and $\sin_p(t)$, and above all, sampling $T_k$ from $f_{T_k}(t)$.
By using a fine grid over a single unsigned $p$-sphere in 2 dimensions, it is possible to accurately approximate the inverse $p$-cosine and $p$-sine functions as well as the cumulative distribution function (CDF) of $f_{T_k}(t)$ for each value of $k$.
These functions can then be inverted using interpolation to obtain $\cos_p(t)$ and $\sin_p(t)$ and to sample $T_k$.
For a complete implementation, refer to the code in the appendix.

\subsection{The p-normal distributions}

The gaussian distribution has the distinctive property that if $X$ and $Y$ are independent and gaussian, then the joint density function $f_{X,Y}(x,y)$ of the vector $(X, Y)$ is isotropic (i.e. radially symmetric), as it can be written as a function of the euclidean norm $f_{X,Y}(x,y) = f(x) f(y) = g(\norm{(x,y)}_2)$.
Moreover, the joint of independent distributions is isotropic if and only if it is a scaled version of the gaussian.

The argument is the following\footnote{A video visualizing and explaining the proof can be found in~\cite{youtube3blue1brown}.}.
For $y=0$ we have $f(x) f(0) = g(|x|)$, so letting $a=\sqrt{|x|}$, $b=\sqrt{|y|}$, and $h(r) \eqdef g(\sqrt{r}) / f(0)^2$, the radial symmetry equation becomes \[
  h(a^2) h(b^2)
  = \frac{g(a)}{f(0)^2} \frac{g(b)}{f(0)^2}
  = \frac{f(\pm a)}{f(0)} \frac{f(\pm b)}{f(0)}
  = \frac{g(\norm{(\pm a,\pm b)}_2)}{f(0)^2}
  = h(a^2 + b^2).
\]
This functional equation is known as the Exponential Cauchy's Functional Equation~\cite{kannappan2009functional-2-2}, and the only non-degenerate\footnote{Pathological solutions exist~\cite{book:28165471} by means of Hamel bases. See~\cite{chen2016monsters} for a quick and clear explanation for the “monstrous” solutions to Cauchy's additive functional equation, which are analogous to the exponential via a logarithmic transformation.} solution is known to be the exponential function~\cite{gaughan2009introduction-4-39,stromberg1965real-ex18-46} $h(a)=e^{c\,x}$.
Therefore, $f$ must be given by $f(x) = f(0)\, e^{-c |x|^2}$ for some constant $c$.
For $f$ to be a density, we must have $c>0$ and $f(0)=c/\sqrt{2\pi}$, where the mysterious appearance of the constant $\pi$ is due precisely to the radial symmetry of the gaussian, a beautiful connection and celebrated fact for which at least 11 different proofs have been found~\cite{conrad2016gaussian}.

With a similar argument, it can be shown that a distribution is radially symmetric in the $p$-norm for $p<\oo$ if and only if it is given by $f(x) = f(0)\, e^{-c |x|^p}$ for some constant $c>0$.
For $p=\oo$, the argument is slightly different.
If $f(y)>0$ for some $y>0$, then for all $x\in[0,y]$, we have $f(\pm x) f(\pm y) = g(\norm{(x,y)}_\oo) = g(y)$, so $f$ is constant on $[-y,y]$.
Therefore, $f$ must be given by $f(x) = f(0) \one{|x|<c}$ for some $c > 0$.

Both for $p<\oo$ and $p=\oo$, the scale of $f$ can be chosen arbitrarily in principle, but for the purposes of this paper, we define the \emph{$p$-normal distribution} $f_p$ as
\begin{align*}
  f_p(x) &\eqdef c_p \exp(-|x|^p/p)\quad;\quad c_p=\frac{1}{2\,\sqrt[p]{p}\,\;\G(1+1/p)},\\
  f_\oo(x) &\eqdef \lim_{p\to\infty} f_p(x) = c_\oo \one{|x|<1}\quad;\quad c_\oo=\lim_{p\to\infty} c_p=\frac{1}{2},
  \tagEquation\label{eq:fp}
\end{align*}
where the default scale guarantees that $E_{X\sim f_p}(\norm{X}_p)=1$.
For different scales ($b\neq 1$), the distribution of $Y=b X$ is given by $f_{p,b}(y) = b f_p(y/b)$.

This distribution has already been investigated with different names in the literature, including the $\theta$-normal distribution~\cite{goodman1973multivariate} ($\propto\exp(-|x|^p)$), the General Gaussian distribution~\cite{livadiotis2012expectation,livadiotis2014chi}, and without a particular name in~\cite{song1997lp-uniform}.
Except for the lattermost scale factor (as a function of $p$) is different.
The reason for this is that these studies are less focused on geometry and more focused on the statistics of the distribution, e.g. the so-called $L^p$-variance.
In this regard, our paper can be regarded as a bridge between these works that focus in statistics and that of~\cite{wood2020squigonometry-chaps3-7}, mostly focused on geometry and calculus.

Moreover, Equation~\eqref{eq:fp} also corresponds to a signed power gamma distribution, a particular case of the signed generalized gamma distribution.
The generalized gamma has been discussed at least since 1924~\cite{amoroso1925ricerche-pp124} with the purpose of studying income curves, and studied in depth since then with different parametrizations and names~\cite{stacy1962generalization,johnson1995continuous,kleiber2003statistical,khodabina2010some-pp11}, e.g. generalized Weibull.

Let $X_1,...,X_n$ be i.i.d. random variables with law $f_p$.
From Equation~\eqref{eq:fp}, the law of the vector $X=(X_1,...,X_n)$ is given by
\begin{align*}
  f_p\nD(x) &\eqdef f_p(x_1)\cdots f_p(x_n) =
  \begin{cases}
    c_p^n \exp(-\norm{x}_p^p/p) & \text{ if }p<\oo,\\
    c_\oo^n\; \one{\norm{x}_\oo < 1} & \text{ if }p=\oo,
  \end{cases} \\
  & = c_p^{n-1}\;f_p(\norm{x}_p).
  \tagEquation\label{eq:fpnD}
\end{align*}
with $c_p$ as indicated in Equation~\eqref{eq:fp}.
In both cases, the joint density $f_p\nD$ is $p$-isotropic, i.e. radially symmetric in the $p$-norm, as it can be written as a function of the the $p$-radius $R_{n,p}=\norm{X}_p$.

The density of $R_{n,p}$ is given by the integration of Equation~\eqref{eq:fpnD} over $p$-spheres.
A $p$-sphere of radius $r$ has a mass proportional to $r^{n-1}$, and the density of $R_{n,p}$ is therefore given by
\begin{align*}
  f_{R_{n,p}}(r) \propto r^{n-1} \exp(-(n/p)\,r^p)\quad ;\quad f_{R_{n,\oo}}(r) \propto r^{n-1} \one{r < 1}.
  \tagEquation\label{eq:fpr}
\end{align*}
By considering a scaled radius $R^\star_{n,p}\eqdef R_{n,p} / \sqrt[p]{n}$ and with the notation $\sqrt[\oo]{n}\eqdef \lim_{p\to\oo}\sqrt[p]{n} = 1$, Equation~\eqref{eq:fp} simplifies Equation~\eqref{eq:fpr} into $f_{R^\star_{n,p}}(r) \propto r^{n-1} f_p(r)$.

\subsection{Algorithms based on the p-normal distributions}

Let $X$ be the random vector $X=(X_1,...,X_n)$ composed of i.i.d. random coordinates with law $f_p$.
Since $X$ is $p$-isotropic, for any fixed radius $r$ and as $\Delta r \to 0$, the conditional distribution of $X$ given $\norm{X}_p\in(r, r+\Delta r)$ converges to a uniform distribution in its domain---the volume between two $p$-spheres with radius difference of $\Delta r$.

The conditional distribution of the vector $\bar X \eqdef X/r$ given $\norm{X}_p\in(r, r+\Delta r)$ also converges as $\Delta r \to 0$ to a uniform distribution in the volume between the $p$-spheres of radius $1$ and $1+\frac{\Delta r}{r}$.
Notice that the same conclusion is valid for any $X$ that is $p$-isotropic, therefore the distribution of $\bar X$ is invariant.

In particular, if $V$ is a random vector uniformly distributed in the unitary $p$-ball, it is $p$-isotropic, hence $V/\norm{V}_p$ is distributed like $X$.
Furthermore, the distribution of $R\eqdef \norm{V}_p$ can be obtained from the same observation that produced Equation~\eqref{eq:fpr}, namely, $f_{R}(r) \propto r^{n-1}$, i.e., a standard power distribution $R\sim\Beta(1, n)$, or $R=U^{1/n}$ where $U\sim\Uni[0,1]$.
These two facts can be combined to reconstruct the variable $V$ as the product $R \bar X$, as presented in Algorithm~\ref{alg:volume-normal}.
However, the same argument does not hold for the hyper-surface (Algorithm~\ref{alg:surface-normal}), unless $p\in\set{1,2,\oo}$.
This is discussed in the next section.

Algorithms~\ref{alg:volume-normal} and~\ref{alg:surface-normal} are not novel.
They have already been presented in different formats in~\cite{cambanis1981theory,song1997lp-uniform}.
In particular, \cite{song1997lp-uniform} derives the density function of Algorithm~\ref{alg:surface-normal} as well as its marginal distribution.
Nevertheless, the distribution of Algorithm~\ref{alg:surface-normal} is said to be “uniformly distributed on the surface of the (Lp-norm) unit sphere”, by definition or intuition, which is a claim contested here and discussed in the next section.

\textbf{Algorithms \staticlabel{V2}\label{alg:volume-normal} and \staticlabel{S2}\label{alg:surface-normal}}.
Simpler alternatives to Algorithms~\ref{alg:volume-coord} and \ref{alg:surface-coord}, respectively.
Algorithm~\ref{alg:volume-normal} is equivalent to Algorithm~\ref{alg:volume-coord} for all $p$, while Algorithm~\ref{alg:surface-normal} is equivalent to Algorithm~\ref{alg:surface-coord} only for $p\in\set{1,2,\oo}$. For other $p$, it fails to sample uniformly from the hyper-surface.
\begin{enumerate}
  \item Sample $Z_1, ..., Z_n \sim \Gamma(n/p)$.
  \item Let $X_i \leftarrow Z_i^{1/p}$ for all $i=1,2,...,n$.
  \item Let $r \leftarrow \norm{(X_1, ..., X_n)}_p$ and $\bar X_i \leftarrow X_i/r$ for all $i=1,2,...,n$.
  \item $
    \begin{cases}
      \text{For Algorithm~\ref{alg:volume-normal} (volume),}&\text{ sample }R \sim \Beta(1, n).\\
      \text{For Algorithm~\ref{alg:surface-normal} (surface),}&\text{ let }R \gets 1.
    \end{cases}
    $
  \item Sample signs $S_1, ..., S_n\sim  \Uni\set{-1, 1}$.
    \item Return $(S_i\, \bar X_i\, R)_{i=1}^n$.
\end{enumerate}

The vector $Z=(Z_1, Z_2, ..., Z_n)$ in Algorithm~\ref{alg:volume-normal} is $p$-isotropic.
In particular, for $p\in\set{1,2,\oo}$, this type of distribution is very used in physics, (mostly $p\in\set{2,\oo}$ and $n\in\set{2,3}$), as well as in other areas of science and engineering.
They are used, for instance, in the initialization and noise production for attacks to neural networks (mostly $p=2$, $n \gg 1$, e.g. Algorithm 1 in~\cite{picot2022halfspace}), and in noise-injection for privacy applications including the laplace mechanism ($p=1$, $n=1$), metric-privacy ($p=1$, $n\in\set{2,3}$)~\cite{andres2013geo} and differentially private stochastic gradient descent ($p=1$, $n\gg 1$) in neural network training.

\section{Discussion}
\label{sec:discussion}

The previous sections introduced two different families of sampling algorithms for $p$-balls and $p$-spheres.
While the volume sampling algorithms are equivalent across methods, the surface sampling algorithms differ in a subtle but crucial way: radial projection preserves uniformity only when $p\in\set{1,2,\oo}$.

In this section, we clarify the conceptual reason behind this phenomenon by relating it to the Borel–Kolmogorov paradox and the role of conditioning on null events.
We then present experimental results supporting the theoretical conclusions.

\subsection{The Borel-Kolmogorov paradox}

The Borel-Kolmogorov paradox is a well-known counterintuitive property in probability theory that arises when trying to define a conditional distribution restricted to a set of measure zero, like a surface on a volume, or a curve on a surface.
This paradox and its resolution serve to illustrate that although Algorithm~\ref{alg:surface-normal} is $p$-isotropic, it does not sample uniformly from the hyper-surface of the $p$-sphere for $p\not\in\set{1,2,\oo}$.

The classic instance of the paradox considers a sphere equipped with the uniform distribution as well as two identical curves\em a meridian line and half the equator\em, and finds different expressions for their conditional distributions.
More precisely, if the domain is restricted to any thin lune bounded by consecutive longitudes in a high resolution grid, then the distribution of area is uniform over the lune while the distribution of latitudes is more concentrated around the equator, where the lune is thicker.
As the resolution increases, the lune approaches a meridian path from pole to pole, and the distribution of latitudes converges to $f_\theta(\theta) \propto \sin \theta$, which is maximal at the equator ($\theta=\pi/2$) and minimal at the poles ($\theta \in\set{0,\pi}$).
Meanwhile, the distribution of longitudes for a fixed latitude converges to a uniform distribution in $[0,2\pi]$, so the distribution of half the equator is uniform.
This difference between the distribution of latitudes on a fixed meridian line and that of longitudes over a half of the equatorial line contradicts conventional wisdom, because the two curves are identical and the spherical shape and the uniform distribution on its surface are invariant under rotations.
There is no reason to believe that they should be different.

There are two solutions to the paradox.
The classic solution is that the conditional distribution given the a fixed event depends on the limiting procedure used to approach the event when the event has probability zero (a null event).
This implies that conditional distributions on null events should always have a subscript indicating the limiting procedure, or a context from which it is obvious. 
According to~\cite{jaynes2003probability470}, “...it is difficult to get people to see that the term ‘great circle’ is ambiguous until we specify what limiting operation is to produce it. The intuitive symmetry argument presupposes unconsciously the equatorial limit; yet one eating slices of an orange might presuppose the other.”
He argued that in absence of information about a limiting procedure, it does not make sense to speak about conditional distributions given null events.
But recently, some researchers have proposed an alternative solution to the paradox: in absence of a reference limiting procedure the Hausdorff measure should be understood~\cite{bungert2020lion}, so the answer to the paradox is the uniform distribution, unless a particular limiting procedure is indicated.
In this paper, we share and promote the view that the canonical $k$-dimensional measure in $n$-dimensions is the $k$-dimensional Hausdorff measure, defined and explained in full detail in~\cite{edgar2008measureHausdorff}.
This measure is so natural that we have been referring to lengths of curves of surfaces in $\bbR^n$ without questioning whether the curves have been approached as very thin worms or very thin rosaries.

The results of this paper are closely related to the Borel-Kolmogorov paradox.
The distributions of $\bar X$ and $X$ in Algorithm~\ref{alg:surface-normal} are related in that $X$ is $p$-isotropic and its density can be written as a product\[
  f_X(x) = f_{\bar X}\lr*{\frac{x}{\norm{x}_p}} f_R(\norm{x}_p), \text{ where } f_{\bar X}(\,\cdot\,) \text{ is a constant function.}
  \tagEquation\label{eq:disintegration}
\]
But this does not imply that the distribution of $\bar X$ is uniform over the $p$-sphere \emph{in the canonical sense}.
While Algorithm~\ref{alg:surface-normal} ensures volumetric uniformity between two $p$-spheres of infinitesimally close $p$-radii, Algorithm~\ref{alg:surface-coord} guarantees superficial uniformity in the canonical sense, and, as proven in Proposition~\ref{prop:nD}, these distributions are different unless $p\in\set{1,2,\oo}$.

\subsection{Experimental validation}
\label{sec:empirical}

We corroborate the theoretical results by an experiment comparing Algorithms~\ref{alg:surface-coord} and~\ref{alg:surface-normal}.

\begin{figure}[ht]
  \begin{minipage}{\textwidth}
    \centering
    \includegraphics[width=0.6\textwidth]{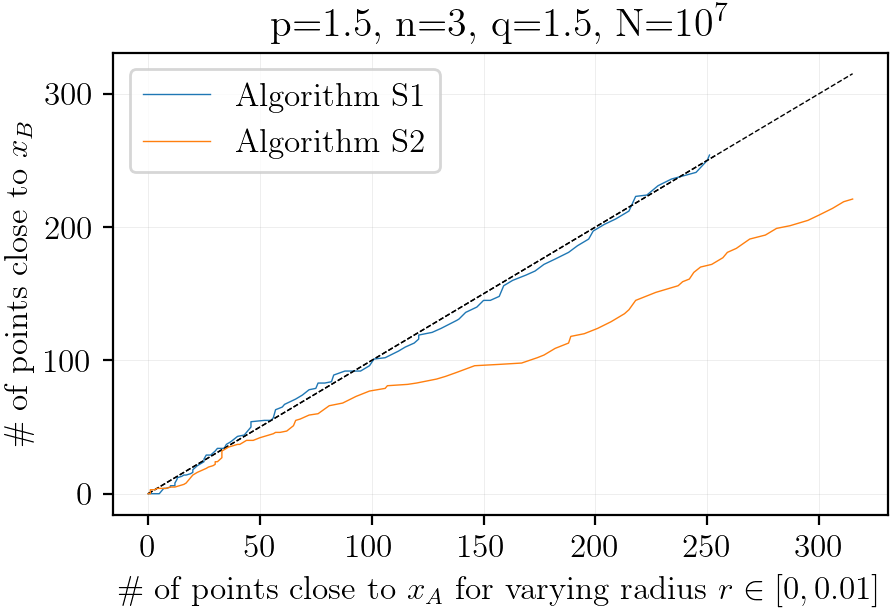}
  \end{minipage}
  \caption{Comparison of neighbor counts around two reference points. Uniformity holds only for Algorithm~\ref{alg:surface-coord}.}
  \label{fig:empirical}
\end{figure}

For a fixed value of $p\notin \set{1,2,\oo}$, we generate a large number $N\gg 1$ of points on the 
$p$-sphere using each algorithm, and select two reference points: $x_A = (0,0,1)$ and $x_B = (\sin_p(\pi_p/5), \cos_p(\pi_p/5)\sin_p(\pi_p/5), \cos_p^2(\pi_p/5))$.
We then count the number of sampled points falling within a small fixed $q$-distance $r\ll 1$ from each reference point.
Under a uniform distribution, these counts should be proportional to the sphere area that is covered by a small $q$-ball and thus approximately equal for both reference points.

Figure~\ref{fig:empirical} shows the counts are aligned for Algorithm~\ref{alg:surface-coord} but not for Algorithm~\ref{alg:surface-normal}.

\section{Conclusion}

This paper has shown that among all $p$-norms in $n$-dimensional space ($n\geq 2$), only $p\in\set{1,2,\oo}$ exhibit the property that when the $X\in\Ball_p^n$ follows a volumetrically uniform distribution, then the radial projection $X\mapsto X/\norm{X}_p $ yields a uniform distribution on the surface $\Sph_p^n$.

Four algorithms for sampling uniformly from the volume and surface of the $p$-ball have been derived and implemented in Python in the Appendix.
The first two are novel and based on recursive squigonometric parametrizations, while the last two are borrowed from the statistical works of~\cite{cambanis1981theory,song1997lp-uniform}, and are based on $p$-normal distribution.
The volume algorithms are distributionally equivalent, but the surface algorithms differ unless $p\in\set{1,2,\oo}$.

This asymmetry is explained through the Borel–Kolmogorov paradox: conditioning on a lower-dimensional subset without controlling the limiting procedure leads to non-uniform distributions.
Finally, experimental results corroborate the theoretical predictions, illustrating the failure of naive radial sampling for $p\notin\set{1,2,\oo}$.

\bibliographystyle{plain} 
\bibliography{main}


\newpage
\appendix

\section*{Appendix}
\subsection*{Python implementation of the algorithms}

For Algorithms~\ref{alg:volume-normal} and~\ref{alg:surface-normal}, sampling from $f_p$ can be done using a generalized gamma distribution.

For Algorithms~\ref{alg:volume-coord} and~\ref{alg:surface-coord}, there are more difficulties.
In order to sample $T_k$ from $f_{T_k}$, one can use an approximation by literally creating a grid with a large number of points that discretizes the unsigned $p$-circumference in 2D, spaced conveniently to improve precision at the critical points $t=0$, $t=\pi/4$ and $t=\pi/2$.
This produces cumulative density functions whose inverse can then interpolate at random values in $(0,1)$.

{
\fontsize{5pt}{5pt}
\begin{lstlisting}{python}
import numpy as np
import scipy.special
from scipy.stats import gengamma
from functools import lru_cache  # Python's standard library

def sample_generators(p: float, n: int, q=None, grid_precision: int = 100000):
    if q is None:
        q = p

    def gen_volume(N):
        if p == np.inf:
            x = np.random.random(size=(N, n))
        else:
            x = gengamma(a=1 / p, c=p).rvs(size=(N, n))
        r = np.linalg.norm(x, ord=p, axis=-1, keepdims=True)
        u = np.random.random(size=(N, 1))
        x = x * np.random.choice([-1, 1], size=x.shape)
        return x / r * u ** (1 / n)

    def gen_surface(N):
        if p in (1, 2, np.inf):  # Use volume projection (faster)
            x = gen_volume(N)
            return x / np.linalg.norm(x, ord=p, axis=-1, keepdims=True)

        x = np.empty((N, n))
        x[:, 0] = 1
        x_grid, y_grid = grid_info["x"], grid_info["y"]
        for k in range(2, n + 1):
            u = np.random.random(N)
            x_k = np.interp(u, xp=t_k_curve(k), fp=x_grid)
            y_k = np.interp(u, xp=t_k_curve(k), fp=y_grid)
            x[:, k - 1] = x_k
            x[:, : k - 1] *= y_k[:, None]
        x *= np.random.choice([-1, 1], size=x.shape)
        return x

    # Auxiliary functions for surface sampling

    @lru_cache(maxsize=n)  # cache to avoid recomputations
    def t_k_curve(k):
        y, dLq = grid_info["y"], grid_info["dLq"]
        cum_t_k = np.cumsum(y ** (k - 2) * dLq)
        return cum_t_k / cum_t_k[-1]

    def p_circumference_grid():
        # (x,y) are the points in the p-circumference. w is y^p.
        # w_eps = min(0.05, max(1 / grid_precision ** (q * p), 1e-16))
        geom = np.geomspace(1e-16, 0.1, num=int(0.05 * grid_precision))
        lin = np.linspace(0.1, 0.4, num=int(0.4 * grid_precision))
        w = np.array([0, *geom, *lin, *(0.5 - geom[::-1])])
        w = np.unique(np.sort([0, *w, 0.5, *(1 - w[::-1]), 1]))
        y = w ** (1 / p) if p < np.inf else np.minimum(2 * w, 1.0)
        x = (1 - w) ** (1 / p) if p < np.inf else np.minimum(2 * (1 - w), 1.0)
        dY = np.diff(y, prepend=0)
        dX = np.diff(x, prepend=1)
        dA = (0.5 * dX * y + 0.5 * x * dY) - y * dX
        t = 2 * np.cumsum(dA)
        dLq = np.linalg.norm([np.abs(dX), np.abs(dY)], ord=q, axis=0)
        Lq = np.cumsum(dLq)
        pi_p, piL_pq = t[-1] * 2, Lq[-1] * 2
        return dict(t=t, x=x, y=y, Lq=Lq, dLq=dLq), pi_p, piL_pq

    grid_info, pi_p, piL_pq = p_circumference_grid()

    return gen_volume, gen_surface, pi_p, piL_pq


# Usage example
gen_volume, gen_surface, *_ = sample_generators(p=1.5, n=3)
xyz_v = gen_volume(10000)
xyz_s = gen_surface(10000)

import plotly.graph_objects as go
_kw = dict(mode="markers", marker=dict(size=2))
kw = lambda xyz: dict(x=xyz[:, 0], y=xyz[:, 1], z=xyz[:, 2], **_kw)
fig = go.Figure()
fig.add_trace(go.Scatter3d(**kw(xyz_v)))
fig.add_trace(go.Scatter3d(**kw(np.abs(xyz_s))))
fig.show()

# Squigonometric functions

def squigonometry(p):
    pi_p = 4 if p == np.inf else scipy.special.beta(1 / p, 1 / p) * 2 / p
    acos_p_ = lambda t: scipy.special.betainc(1 / p, 1 / p, t**p) * 2 / pi_p / p
    sin_p__ = lambda t: scipy.stats.beta(1 / p, 1 / p).ppf(t) ** (1 / p)
    sin_p_ = lambda t: np.sign((t + 2) % 4 - 2) * sin_p__(np.abs((t + 1) % 2 - 1))
    cos_p_ = lambda t: sin_p_(t + 1)
    sin_p_oo = lambda t: np.sign((t + 2) % 4 - 2) * np.minimum(
        1, 2 * np.abs((t + 1) % 2 - 1)
    )
    cos_p_oo = lambda t: sin_p_oo(t + 1)
    cos_p = lambda t: (cos_p_(t / (pi_p / 2)) if p < np.inf else cos_p_oo(t / 2))
    sin_p = lambda t: (sin_p_(t / (pi_p / 2)) if p < np.inf else sin_p_oo(t / 2))
    return pi_p, cos_p, sin_p


# Functions for uniformity test:
def curve(samples, ref):
    dists = np.linalg.norm(samples - ref, ord=q, axis=-1)
    # cut to analyze local behavior only (manifold)
    dists = dists[dists < delta]
    x = np.linspace(0, delta, bins)
    y = np.bincount(np.digitize(dists, x), minlength=len(x))
    return np.cumsum(y)

p = 1.5
q = p
n = 3
gen_volume, gen_surface, *_, V, S = sample_generators(p=p, q=q, n=n)

delta = 0.01
bins = 100
N = 10**7
np.random.seed(0)  # Exact reproducibility
points_S1 = gen_surface(N)
points_S2 = gen_volume(N)
points_S2 /= np.linalg.norm(points_S2, ord=p, axis=-1, keepdims=True)


# Curves for comparison
pi_p, cos_p, sin_p = squigonometry(p)
x_A = np.array([0, 0, 1])
x_B = np.array(
    [sin_p(pi_p / 5), cos_p(pi_p / 5) * sin_p(pi_p / 5), cos_p(pi_p / 5) ** 2]
)
curves_S1 = [curve(points_S1, ref) for ref in [x_A, x_B]]
curves_S2 = [curve(points_S2, ref) for ref in [x_A, x_B]]

import matplotlib.pyplot as plt

plt.plot(curves_S1[0], curves_S1[1], label="S1")
plt.plot(curves_S2[0], curves_S2[1], label="S2")
plt.plot([0, 250], [0, 250], "k--", lw=0.5)
plt.legend()
plt.show()
\end{lstlisting}
}

\includegraphics[width=0.6\textwidth]{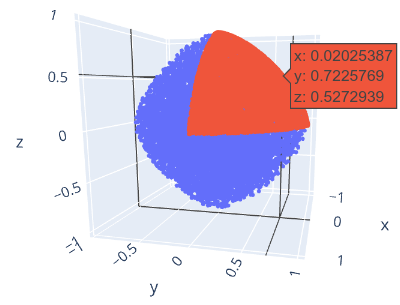}

\end{document}